\newtheorem{Theorem}{\textbf{Theorem}}
\newcommand{\algorithmicinput}{\textbf{input}}
\newcommand{\INPUT}{\item[\algorithmicinput]}
\newcommand{\algorithmicoutput}{\textbf{output}}
\newcommand{\OUTPUT}{\item[\algorithmicoutput]}
\numberwithin{equation}{section} 
\numberwithin{Theorem}{section}
\newtheorem{Definition}{Definition}
\numberwithin{Definition}{section}
\numberwithin{Lemma}{section} 
\numberwithin{Proposition}{section}
\newtheorem{Example}{Example}
\numberwithin{Remark}{section}
\numberwithin{Example}{section}
\numberwithin{Observation}{section}
\numberwithin{Algorithm}{section}
\newtheorem{Note}{Note}
\numberwithin{Corollary}{section}
\title{A study on fuzzy plane and its application on fuzzy plane fitting}
\begin{document}
    
\author*[1]{\fnm{Diksha} \sur{Gupta}}\email{drdikshagupta9524@gmail.com}

\affil[1]{\orgdiv{School of Engineering,} \orgname{Ajeenkya DY Patil University}, \orgaddress{\city{Pune}, \postcode{412105}, \state{Maharashtra}, \country{India}}}

\abstract{
In this paper, I obtain an $S$-type fuzzy point when two fuzzy numbers for two independent variables and a corresponding fuzzy number for the dependent variable are given. A comprehensive study on a conceptualization of a fuzzy plane as a collection of fuzzy numbers, or fuzzy points is proposed. A perpendicular fuzzy distance from a fuzzy point to a fuzzy plane is also revisited. An application of the proposed fuzzy plane is made to fit a fuzzy plane to the available data sets of imprecise locations in $\mathbb{R}^3$. Moreover, a degree of fuzzily fitted fuzzy plane to the given data sets of fuzzy points is defined. All the fuzzy geometric construction and characteristics of fuzzy planes are explored with the help of same and inverse points ideas. All the study is supported by numerical examples and illustrated by fuzzy geometrical figures. This study provides a framework for developing a fuzzy plane-fitting model that will benefit the fields of curve detecting and fitting, image processing for industrial and scientific applications, signal processing, and problems of shape recognition.  }

\keywords{Fuzzy numbers; Fuzzy points; Same and inverse points; Fuzzy planes; Fuzzy distance.}

\maketitle

\section{Introduction}
Tanaka et al. \cite{asai1982linear} originally introduced fuzzy linear regression since the problems with crisp input and fuzzy output commonly exist in real life. Tanaka's approach was used by many researchers \cite{chang1996applying, lai1994fuzzy, heshmaty1985fuzzy, wang2000insight} to minimize the total propagation of the fuzzy output. In \cite{diamond1988fuzzy}, a fuzzy least square method was introduced which took care of the minimization of the output total square error and applied in \cite{ming1997general, ruoning1997s}. 
The deficiency of Tanaka's approach \cite{asai1982linear} and fuzzy least square method \cite{diamond1988fuzzy} is discussed and modified in \cite{wang2000resolution}. A continued study has been made on fuzzy regression analysis \cite{tanaka1987fuzzy, tanaka1988possibilistic, tanaka1989possibilistic, kim1998evaluation}. In \cite{kao2002fuzzy}, a fuzzy linear regression model has been proposed with a better explanation that performs better than previous studies. All the forgoing study was revisited by Shapiro \cite{shapiro2005fuzzy} on fuzzy regression and discussed the issue related to Tanaka's approach and fuzzy least square methods. 
Pham \cite{pham2001representation} analyzed a need for fuzziness in computer-aided design \cite{pham1997hybrid, pham1999shape, pham2003fuzzy}.

In \cite{scarmana2016application}, the least square plane fitting interpolation process has been used for image reconstruction and enhancement. Vosselman and Dijkman \cite{vosselman20013d} described reconstruction-building models from point clouds and ground plans. Their method involved extracting planar faces from irregularly distributed point clouds using the 3D Hough transform.
 However, these models are extremely sensitive to noise. Hence, a modified study on these models has been done by Bodum \cite{bodum2004automatic}. After that, Kurdi et. al. \cite{tarsha2007hough} presented and compared the 3D-Hough transform and RANSAC algorithm for plane detection from Lidar data. 

A detailed research background and significance of point cloud has been highlighted in \cite{yang2020point, borrmann20113d} that is one of the fundamental measurement data sources and has been widely used in several fields such
as 3D-LiDAR \cite{yan2020online}, additive manufacturing \cite{xu2018plsp}, and unmanned aerial vehicle\cite{jian2018method}. A growing number of construction and infrastructure projects used point clouds to improve productivity, quality, and safety. Generally, airborne laser scanners are the most effective method for collecting rapid, high-density 3D data over cities. Once the 3D data were available, automatic data processing was the next task with the aim of constructing 3D building models. Therefore, automatic building modeling has been carried out once the roof planes are successfully detected. Borrmann et.al. \cite{borrmann20113d} investigated various forms of the Hough transform concerning their applicability to 
detect planes in 3D point clouds. A 3D classical Hough transform has been used to extract planar faces from the irregularly allocated point clouds \cite{vosselman20013d}. Drost and Ilic \cite{drost2015local}proposed a novel approach to detect and segment some 3D primitives, namely, planes, spheres, and cylinders from 3D point clouds. In \cite{yang2020point}, a new method based on a modified fuzzy C-means clustering algorithm with reserved feature information has been outlined. In \cite{mirzaei20223d}, three efficient applications of point clouds in the construction and infrastructure industries have been presented.

An ordinary least squares regression method to fit a plane was initially developed by Gauss \cite{gauss1809theoria}. This approach is easier to understand when two variables are involved, where $x$ is the independent variable, and $y$ is the dependent variable. However, it can also be applied to higher-dimensional models.
All forgoing study focuses on the theoretical point of view of regression analysis to give a relationship between a list of $n$ independent and a corresponding list of $n$ dependent fuzzy data. However, there is a lack of information about the fuzzy shapes which has been fitted to fuzzy data. A geometrical interpretation of fitted fuzzy functions such as fuzzy lines, fuzzy curves, fuzzy planes, etc., has not been mentioned. An explanation of the least square method in the geometrical sense to fit the imprecise data has not been given. Since a list of $n$ fuzzy numbers for independent variables and a corresponding list of $n$ fuzzy numbers for dependent variables generate the fuzzy points, a geometrical description of the fuzzy shape that has to be fitted to these data has not been described. Hence, we make a study on fuzzy plane fitting by following the classical plane fitting procedure using a fuzzy geometrical approach. According to our different approaches, we will take care that fuzzy plane fitting will not have an increasing spread. In our study, we extend the total least square approach in the fuzzy environment to get a fitted fuzzy plane. In ordinary least squares regression, all error is assumed to lie in the dependent dimension, and therefore the residual errors in that variable have been minimized.
There are many situations when using total least squares would be a preferable strategy in which residual errors are minimized in a direction perpendicular to the best-fit plane. The total least squares approach is not edge-cutting (Adcock, \cite{adcock1877note, adcock1878problem}; Pearson, \cite{pearson1901liii}), and according 
to Anderson \cite{anderson1984estimating}, it has been reinvented many times \cite{sturgul1970best, gower1973very}. An important advantage of using orthogonal regression is that the magnitudes of the residual errors are independent of the orientation of the 
best-fit plane concerning any of the reference axes (see \cite{fernandez2005obtaining}). In \cite{petravs2010total}, classical mathematical methods like orthogonal regression or errors in variable methods and a Matlab toolbox to solve these methods have been presented.

A few studies have been focused on fuzzy regression analysis by the fuzzy geometrical approach.
In recent work, \cite{ghosh2015general}, a generalized version of a fuzzy line has been proposed and applied to fit a fuzzy line to a dataset containing imprecise points or locations.
After that, in \cite{das2022conceptualizing}, Das and Chakraborty conceptualize a linear fuzzy function of a fuzzy line as a collection of fuzzy numbers or fuzzy points.

In regression analysis, a plane plays a significant role in analyzing the relationships between a dependent and two independent variables, in $\mathbb{R}^3$. However, a crisp plane is not always enough when observed data is inherently imprecise. When a description of data is vague, imprecise, or inadequate, their relationship by classical geometry may not be possible. An efficient presentation of the relation between objects or data with inherently imprecise realizations necessitates formulating a fuzzy plane. Hence, the classical definition of a plane has been extended to the fuzzy environment in $\mathbb{R}^3$, in \cite{ghosh2023analytical}. In the present study, I revisit the construction of fuzzy planes in which a linear fuzzy equation can be perceived as a fuzzy plane. Also, a fuzzy plane is conceptualized as a collection of fuzzy numbers or fuzzy points. Some real-life examples based on industry are explained to show the significance of the proposed fuzzy plane. The purpose is to provide a unified foundation of a framework for developing fuzzy geometric modeling which will benefit both computer vision applications and creative design.

The remainder of the paper is organized as follows. Preliminary definitions and notations are dealt with in section in Section \ref{sec2}. A fuzzy linear equation is represented as a fuzzy plane that is perceived as a group of fuzzy numbers, or space fuzzy points in Section \ref{sec3}. Section \ref{sec4} includes the formulation of a fuzzy plane fitted to the given imprecise locations. In Section \ref{sec4}, an application of the proposed fitted fuzzy plane is delineated. In Section \ref{sec5}, the article concludes and discusses possible research areas for the future.

\section{Preliminaries}\label{sec2}

In this paper, we refer \cite{ghosh2021analytical} for the definitions and terminology of $LR$-type fuzzy numbers \cite{ghosh2021analytical}, plane fuzzy points \cite{ghosh2021analytical}, $S$-type fuzzy points \cite{ghosh2021analytical}, and fuzzy distance between space fuzzy points \cite{ghosh2021analytical}. We refer \cite{ghosh2023analytical} for the definition of distance between a fuzzy plane and an $S$-type fuzzy point. Hence, we omit these definitions from the present study. We put a \emph{tilde bar} over  small or capital letters, i.e., $\widetilde{a}$, $\widetilde{b}$, $\widetilde{c}$, \ldots, $\widetilde{A}, \widetilde{B}, \widetilde{C}$, \ldots, to indicate fuzzy sets in $\mathbb{R}^{n}$. The notation $\mu\left(x \middle\lvert\widetilde{A}\right)$  represents the membership function of a fuzzy set $\widetilde{A}$ in $\mathbb{R}^n$, $x \in \mathbb{R}^n$.

The following definition deals with the notion of translation of an $S$-type fuzzy point along a direction $(\ell, m, n)\in \mathbb{R}^3$.

\begin{Definition}\label{translation}
\normalfont{(\emph{Translation of an $S$-type fuzzy point along a direction $(\ell, m, n)$}).\\
	 Let $\widetilde{P}(a, b, c)$ be an $S$-type fuzzy point (see Definition $3.2$ in \cite{ghosh2021analytical}) whose membership number at a point $(x, y, z)$ be evaluated by 
	\[\mu \left((x, y, z)\middle\lvert\widetilde P\right) = f(x-a, y - b, z - c). \]
	Let \[\tfrac{x-a}{\ell}=\tfrac{y-b}{m}=\tfrac{z-c}{n}=\lambda\] be a direction passing through $(a, b, c)$.
	 Then, the translation of $\widetilde{P}(a, b, c)$, say $\widetilde {P}_T$, along a direction $(\ell, m, n)$ is defined by the membership function as
	\[\mu \left((x, y, z)\middle\lvert\widetilde P_T\right) = f(x-(a+\lambda \ell), y - (b+\lambda m), z - (c+\lambda n)). \]

\noindent Explicitly, the translation of $(x, y, z)\in \widetilde{P}(0)$ along a direction $(\ell, m, n)$ to a new position $(x', y', z')$ is obtained by using the translation matrix
	$$ T=
	\begin{pmatrix}
	1 & 0 & 0 & \lambda\ell\\
	0 & 1 & 0 & \lambda m\\
	0 & 0 & 1 & \lambda n\\
	0 & 0 & 0 & 1
	\end{pmatrix}
	$$ such that $T(x, y, z)=(x', y', z')$.
}
	\end{Definition}
During translation, $\widetilde{P}(0)$ gets shifted to another position while keeping the shape and size of the fuzzy point intact. If $\widetilde{P}_T$ is obtained by the translation of $\widetilde{P}$ along a direction, then $\widetilde{P}$ and $\widetilde{P}_T$ are said to be translation copies of each other.

In the following subsection, we perceive that two fuzzy numbers for the independent variables and the corresponding fuzzy number for the dependent variable form an $S$-type fuzzy point. 

\subsection{Conceptualization  of an \texorpdfstring{$S$}{Lg}-type fuzzy point by two fuzzy numbers for independent variables and corresponding fuzzy number for a dependent variable}
Let $\widetilde{a}=(a-l_1/a/a+l_1)_{L_1R_1}$ and $\widetilde{b}=(b-l_2/b/b+l_2)_{L_2R_2}$ be two fuzzy numbers for independent variables, say $a$ and $b$, respectively, where $l_1$, $l_2$, be any number and $L_1=L_2=R_1=R_2=\max\{0, 1-\lvert x \lvert\}$. Let $\widetilde{c}=(c-l_3/c/c+l_3)_{L_3R_3}$ be a fuzzy number for a dependent variable $c$, where $l_3$ be any number, and $L_3=R_3=\max\{0, 1-\lvert x \lvert\}$. Then, a fuzzy set $\widetilde{P}$ at $(a, b, c)$ defined by 
\begin{equation}\label{eq1}
	\mu\left((x, y,z)\middle\lvert\widetilde{P}(a, b, c)\right)
	= \min\left\lbrace \mu\left(x \middle\lvert \widetilde{a} \right), \mu\left(y \middle\lvert \widetilde{b} \right), \mu\left(z \middle\lvert \widetilde{c} \right)\right\rbrace,
	\end{equation}
is a space fuzzy point.
One can note that the fuzzy set defined in (\ref{eq1}) qualifies all the properties of space fuzzy points (see Definition 3.2 in \cite{ghosh2021analytical}).

Now,
 
\begin{align*}
& ~ \min\left\lbrace \mu\left(x \middle\lvert \widetilde{a} \right), \mu\left(y \middle\lvert \widetilde{b} \right), \mu\left(z \middle\lvert \widetilde{c} \right)\right\rbrace\\
= & ~ 1-\max\left\lbrace 1- \mu\left(x \middle\lvert \widetilde{a} \right), 1- \mu\left(y \middle\lvert \widetilde{b} \right), 1- \mu\left(z \middle\lvert \widetilde{c} \right) \right\rbrace.\\
= & ~ 1-\max\left\lbrace \left\lvert\tfrac{x-a}{l_1}\right\lvert, \left\lvert\tfrac{y-b}{l_2}\right\lvert, \left\lvert\tfrac{z-c}{l_3}\right\lvert  \right\rbrace.
\end{align*}
Hence, we can see that a space fuzzy point $ \widetilde{P}(a, b, c)$ defined by (\ref{eq1}) is an $S$-type fuzzy point (see Definition 3.2 and Theorem 3.1 in \cite{ghosh2021analytical}).   

\begin{Note}
    \normalfont{It is noticeable that if we take one fuzzy number $\widetilde{a}$ for the independent variable and the corresponding fuzzy number $\widetilde{b} $ for a dependent variable in (\ref{eq1}), then, without loss of generality, the fuzzy set defined in (\ref{eq1}) reduces to a plane fuzzy point. }
\end{Note}

Figure \ref{fig1} interprets a geometrical visualization of a cuboid-shaped space fuzzy point when two fuzzy numbers $\widetilde{a}$, $\widetilde{b}$ along the lines parallel to the $x$ and $y$-axes, and a fuzzy number $\widetilde{c}$ along a line parallel to the $z$-axis are given. It is noted that to get a cuboid-shaped space fuzzy point, we need three reference fuzzy numbers, that is fuzzy numbers along the reference axes.

\begin{figure}[ht]
	\begin{center}
		\includegraphics[scale=1]{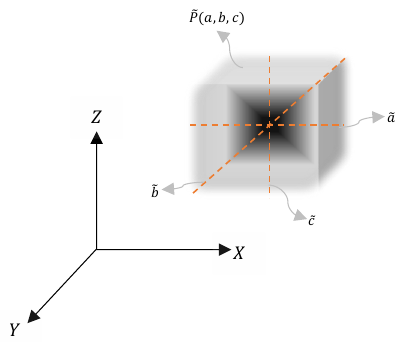}
		\caption{Cuboid-shaped space fuzzy point}
		\label{fig1}
	\end{center}
\end{figure}

\section{Perception of a fuzzy plane as a collection of fuzzy numbers and \texorpdfstring{$S$}{Lg}-type fuzzy points}\label{sec3}

In the literature, a variety of formulations of fuzzy planes have been demonstrated (see \cite{ghosh2023analytical}). To the best of my knowledge, a fuzzy equation that represents a fuzzy plane has not been explained previously. In the present study, I show that a fuzzy equation of the form $\widetilde{a}x+\widetilde{b}y+\widetilde{c}z+\widetilde{d}=0$ also represents a fuzzy plane, say $\widetilde{\varPi}$, where $\widetilde{a}$, $\widetilde{b}$, $\widetilde{c}$ and $\widetilde{d}$ are fuzzy parameters. A mathematical representation of a fuzzy plane $\widetilde{\varPi}: \widetilde{a}x+\widetilde{b}y+\widetilde{c}z+\widetilde{d}=0$ can be demonstrated as follows

\begin{align*}
&~ \mu\left((x, y, z)\middle\lvert\widetilde{\varPi}\right)
=\sup\{ \alpha: (x, y, z )\in \varPi^\alpha: a^\alpha x + b^\alpha y+ c^\alpha z + d^\alpha=0 \},
\end{align*}
where
\begin{equation}\label{eq5}
\begin{cases}
 a^\alpha, b^\alpha, c^\alpha, d^\alpha ~\text{are the same points of}~ \widetilde{a}, \widetilde{b}, \widetilde{c}, \widetilde{d} & \text{if $x, y, z \geq 0$},\\
  a^\alpha, b^\alpha, c^\alpha, d^\alpha ~\text{are the same points of}~ -\widetilde{a}, \widetilde{b}, \widetilde{c}, \widetilde{d} & \text{if $x\leq 0, y\geq 0, z \geq 0$},\\
   a^\alpha, b^\alpha, c^\alpha, d^\alpha ~\text{are the same points of}~ \widetilde{a}, -\widetilde{b}, \widetilde{c}, \widetilde{d} & \text{if $x\geq 0, y\leq 0, z \geq 0$},\\
    a^\alpha, b^\alpha, c^\alpha, d^\alpha ~\text{are the same points of}~ \widetilde{a}, \widetilde{b}, -\widetilde{c}, \widetilde{d} & \text{if $x\geq 0, y\geq 0, z \leq 0$},\\

     a^\alpha, b^\alpha, c^\alpha, d^\alpha ~\text{are the same points of}~ -\widetilde{a}, -\widetilde{b}, \widetilde{c}, \widetilde{d} & \text{if $x\leq 0, y\leq 0, z \geq 0$},\\
     
     a^\alpha, b^\alpha, c^\alpha, d^\alpha ~\text{are the same points of}~ \widetilde{a}, -\widetilde{b}, -\widetilde{c}, \widetilde{d} & \text{if $x\geq 0, y\leq 0, z \leq 0$},\\

     a^\alpha, b^\alpha, c^\alpha, d^\alpha ~\text{are the same points of}~ \widetilde{a}, -\widetilde{b}, -\widetilde{c}, \widetilde{d} & \text{if $x\leq 0, y\geq 0, z \leq 0$},\\
  
   a^\alpha, b^\alpha, c^\alpha, d^\alpha ~\text{are the same points of}~ \widetilde{a}, \widetilde{b}, \widetilde{c}, -\widetilde{d} & \text{if $x, y, z \leq 0$}.\\
\end{cases}
	\end{equation}
The Equation (\ref{eq5}) describes that a fuzzy plane is a collection of crisp planes in the $\mathbb{R}^3$-space with varied membership values.

According to the Dubois and Prade \cite{dubois1980fuzzy}, the equation $\widetilde{\varPi}: \widetilde{a}x+\widetilde{b}y+\widetilde{c}z+\widetilde{d}=0$ can be seen as a fuzzifying function $\widetilde{f}$ from $\mathbb{R}$ to $\widetilde{P}(Z)$, i.e., $\widetilde{f}:z \rightarrow \left( \tfrac{\widetilde{d}}{\widetilde{c}}-\tfrac{\widetilde{a}x}{\widetilde{c}}-\tfrac{\widetilde{b}y}{\widetilde{c}}\right)$, where $\widetilde{P}(Z)$ denotes the fuzzy power set of $Z$. Fuzzifying functions are defined as the sum-min compositions of their associated fuzzy function.
This representation can also be seen as $\widetilde{z}=\widetilde{f}(x, y)=\left( \tfrac{\widetilde{d}}{\widetilde{c}}-\tfrac{\widetilde{a}x}{\widetilde{c}}-\tfrac{\widetilde{b}y}{\widetilde{c}}\right)$. Hence, the equation $\widetilde{a}x+\widetilde{b}y+\widetilde{c}z+\widetilde{d}=0$ represents a collection of fuzzy numbers $\widetilde{z}$ at the points $(x,y, z)\in\mathbb{R}^3$ along the direction perpendicular to $xy$-plane. A fuzzy plane can be conceptualized as a collection of fuzzy numbers, represented as $((x, y, z), \widetilde{z})$, where $(x, y, z)$ is the core of $\widetilde{z}$.

Let $\widetilde{z}=\widetilde{f}(h, k)= \left( \tfrac{\widetilde{d}}{\widetilde{c}}-\tfrac{\widetilde{a}h}{\widetilde{c}}-\tfrac{\widetilde{b}k}{\widetilde{c}}\right)$ be a fuzzy number, placed along the line $x=h, y=k$ which is perpendicular to $xy$-plane. Then, the membership function of $\widetilde{z}$ can be defined as follows:
\begin{equation}\label{eq2}
	\mu\left((x, y, z) \middle\lvert\widetilde{f}(h, k)\right)
	= \begin{cases}
\mu\left( z \middle\lvert\widetilde{f}(h, k)\right) & \text{if $x=h, y=k$}\\
 0 & \text{otherwise.}
\end{cases}
	\end{equation}
By varying values of $(h, k)\in\mathbb{R}^2$, the equation $\widetilde{a}x+\widetilde{b}y+\widetilde{c}z+\widetilde{d}=0$ produces the collection of fuzzy numbers represented by Equation (\ref{eq2}). Easily it can be noted that the collection of fuzzy numbers by Equation (\ref{eq2}) is equivalent to the fuzzy plane obtained by equation $\widetilde{a}x+\widetilde{b}y+\widetilde{c}z+\widetilde{d}=0$.
Let $\left(x,y, \tfrac{d^\alpha}{c^\alpha}-\tfrac{a^\alpha x}{c^\alpha}-\tfrac{b^\alpha y}{c^\alpha}\right)$ be a point on $x=h, y=k$. Then, by Equation (\ref{eq2}),   

\begin{equation}\label{eq3}
	\mu\left(\left(x,y, \tfrac{d^\alpha}{c^\alpha}-\tfrac{a^\alpha x}{c^\alpha}-\tfrac{b^\alpha y}{c^\alpha}\right) \middle\lvert\widetilde{f}(h, k)\right)
	= \alpha.
	\end{equation}
So, the points on $\widetilde{f}(h, k)(0)$ with membership grade $\alpha$ lies on the plane $\varPi^\alpha:a^\alpha x+b^\alpha y+c^\alpha z+d^\alpha=0$. Equivalently, the plane $\varPi^\alpha: a^\alpha x+b^\alpha y+c^\alpha z+d^\alpha=0 $ includes all the points having membership number $\alpha$. Thus, the chunks of fuzzy numbers $\widetilde{f}(h, k)$ is equivalent to fuzzy plane  $\widetilde{\varPi}: \widetilde{a}x+\widetilde{b}y+\widetilde{c}z+\widetilde{d}=0$, for varying $(h, k)\in\mathbb{R}^2$. Hence, the fuzzy plane $\widetilde{\varPi}: \widetilde{a}x+\widetilde{b}y+\widetilde{c}z+\widetilde{d}=0$ can be seen as a $\underset{(x, y)}{\bigcup}\widetilde{f}(x, y)$, where $(x, y)\in\mathbb{R}^2$.

\begin{Note}
\normalfont{In the above scenario, I have taken the $xy$-plane as the reference plane and the lines parallel to the $z$-axis as reference lines along which the fuzzy numbers are defined by (\ref{eq2}). But, if the reference plane is any general plane $lx+my+nz+d=0$ and $\tfrac{x-p}{l}=\tfrac{y-q}{m}=\tfrac{z-r}{n}$ is a line perpendicular to the plane $lx+my+nz+d=0$, then the membership function of $\widetilde{f}(h,k)$, can be established as follows
\begin{equation}\label{eq4}
	\mu\left((h, k, z) \middle\lvert\widetilde{f}(h, k)\right)
	= \begin{cases}
\mu\left( \lambda n +r \middle\lvert\widetilde{f}(h, k)\right) & \text{if $\tfrac{h-p}{l}=\tfrac{k-q}{m}=\tfrac{z-r}{n}=\lambda$,}\\
 0 & \text{otherwise.}
\end{cases}
	\end{equation}  
 }
\end{Note}

\subsection{Fuzzy plane as a chunks of space fuzzy points}
In this subsection, we see a fuzzy plane $\widetilde{\varPi}: \widetilde{a}x+\widetilde{b}y+\widetilde{c}z+\widetilde{d}=0$ as a collection of fuzzy points.
For the sake of simplicity, we consider $\widetilde{d}=0$. Let a point $(x, y, z)$ be in the crisp plane $ax+by+cz=0$. Consider a line $y=mx, z=0$ passing through $(0, 0, 0)$ in $xy$-plane. Since, the equation $\widetilde{a}x+\widetilde{b}y+\widetilde{c}z=0$ can be perceived as a fuzzification function $\widetilde{f}(x, mx)=\tfrac{\left(\widetilde{a}+\widetilde{b}m\right) x}{-\widetilde{c}}$, where $\widetilde{a}$, $\widetilde{b}$, $\widetilde{c}$ are fuzzy parameters. So, at the point $(x, mx, 0)$, the  fuzzy number $\widetilde{z}=\widetilde{f}_m(x, mx)= \tfrac{\left(\widetilde{a}+\widetilde{b}m\right) x}{-\widetilde{c}}$ gives the fuzziness in the vertical dimension (perpendicular to $xy$-plane). Similarly, the fuzzy number $\widetilde{f'}_m (z)=\tfrac{-\widetilde{c}z}{\widetilde{a}+m\widetilde{b}}$ gives the fuzziness in the horizontal dimension which is deduced from $\widetilde{a}x+\widetilde{b}y+\widetilde{c}z=0$.
One can note that the order pair $(\widetilde{f'}_m (z), \widetilde{z})$ form a fuzzy point at the point $(x, mx, z)$ by (\ref{eq1}). Hence, for varying $(x, mx, z)$, a fuzzy plane can be conceptualized as a collection of fuzzy points $(\widetilde{f'}_m (z), \widetilde{z})$.\\
Also, for some $m\in\mathbb{R}$, these collections of fuzzy points form a space fuzzy line (see Section $3$ in \cite{ghosh2023analytical}), say $\widetilde{L}_m$, represented as $\widetilde{L}_m=\underset{(x, mx, z)}{\bigcup}(\widetilde{f'}_m (z), \widetilde{z})$ on $\widetilde{\varPi}$, whose core is the line $y=m x$. The fuzzy plane $\widetilde{\varPi}$ can be seen as the union of the fuzzy lines $\widetilde{L}_m$, for varying $m\in\mathbb{R}$.  

Next, with the help of the intercept form of a fuzzy plane as defined in \cite{ghosh2023analytical}, I give a numerical example that shows a fuzzy plane as a collection of fuzzy numbers and fuzzy points.
 
\begin{Example}
\normalfont{Consider three fuzzy numbers $ \tilde{a}=\tilde{b}=\tilde{c}=(-2/1/2) $. It can be easily seen that the points `$ 3 \alpha-2 $, $3 \alpha-2$ and $3 \alpha-2 $', or `$ 2-\alpha$, $2-\alpha$ and $ 2-\alpha $' appear on $ \tilde{a}$, $\tilde{b} $ and $ \tilde{c} $ are the same points, respectively, with the membership number $ \alpha\in[0, 1] $.
		The support of $\widetilde{\varPi}_{I} $ is the group of crisp planes satisfying the points $ (3 \alpha-2, 0, 0)$, $(0, 3 \alpha-2, 0)$ and $(0, 0, 3 \alpha-2) $, or $ (2-\alpha, 0, 0)$, $(0, 2-\alpha, 0)$ and $(0, 0, 2-\alpha) $ of $ \tilde{a}(0)$, $\tilde{b}(0) $ and $ \tilde{c}(0) $, respectively.

  Mathematically,\\
{\footnotesize{
 \begin{align*} 
		 &~\widetilde{\varPi}_{I}(0)
		 =\bigcup_{\alpha\in[0,1]}\left\lbrace  (x,y,z): \dfrac{x}{3 \alpha-2}+\dfrac{y}{3 \alpha-2}+\dfrac{z}{3 \alpha-2}=1, \text{or} ~ \dfrac{x}{2-\alpha}+\dfrac{y}{2-\alpha}+\dfrac{z}{2-\alpha}=1 \right\rbrace\\ 
			\end{align*}
       }
}and $ \widetilde{\varPi}_{I}(1): x+y+z=1 $.

  The $\alpha$-level planes $x+y+z=3 \alpha-2$ or $x+y+z=2-\alpha$ can be written as $z=3 \alpha-2-h-k$ or $z=2-\alpha-h-k$. Now, for varying $\alpha\in [0, 1]$, a fuzzy number $\widetilde{z}=\widetilde{1}-h-k$ will be obtain whose points with same membership value $\alpha$ are $z=3 \alpha-2-h-k$ and $z=2-\alpha-h-k$ along the line $x=h, y=k$. Consequently, a fuzzy plane $\widetilde{\varPi}_{I} $ can be portrayed as a chunk of fuzzy numbers $\widetilde{z}=\widetilde{1}-h-k$ for varying points along the line $x=h, y=k$.  
  Similarly, a fuzzy number $\widetilde{f'}_{mc}(z)= \tfrac{\widetilde{1}-z-c}{(m+1)}$ is obtained along the direction perpendicular to the $x=h, y=k$, where $h$ and $k$ satisfies the relation $ k=m h+c$, for $m, c\in\mathbb{R}$. The order pair $(\widetilde{f'}_{m c}(z), \widetilde{z})$ form a fuzzy point, say $\widetilde{P}(h, m h+c, z)$, at the point $(h, m h+ c, z)$. By (\ref{eq1}), the membership number of $\widetilde{P}(h, m h+c, z)$ can be established as 
  \begin{equation}\label{eq6}
	\mu\left((x, y,z)\middle\lvert\widetilde{P}(h, m h+c, z)\right)
	= \min\left\lbrace \mu\left(x \middle\lvert \widetilde{f'}_{m c} \right), \mu\left(y \middle\lvert m\widetilde{ f'}_{m c}+c \right), \mu\left(z \middle\lvert \widetilde{z} \right)\right\rbrace.
	\end{equation}
 Here, after a simple calculation, we get $\widetilde{z}=(-9/-6/-5)$ and $\widetilde{f'}_{m c}=(1.5/3/3.5)$, for $h=3$, $k=4$ and $m=c=1$.
 Let us suppose the membership number of a point $(3, 4, -7)$ has to be evaluated.
 So, \begin{align*}
	\mu\left((3, 4,-7)\middle\lvert\widetilde{P}(3, 4, -6)\right)
	= &~\min\left\lbrace \mu\left(3 \middle\lvert \widetilde{f'}_{m c} \right), \mu\left(4 \middle\lvert \widetilde{f'}_{m c}+1 \right), \mu\left(-7 \middle\lvert \widetilde{z} \right)\right\rbrace\\
 = & ~ \min \left\lbrace 1, 1, 0.667 \right\rbrace \\
=& ~ 0.667.
	\end{align*}
  }
\end{Example}

Let us go through the study of the distance between a fuzzy plane and an $S$-type fuzzy point which is an important tool to get a fitted fuzzy plane for further study. We define two types of distance from an $S$-type fuzzy point to a fuzzy plane as follows.

\subsection{Fuzzy distance between an \texorpdfstring{$S$}{Lg}-type fuzzy point and a fuzzy plane}

\begin{Definition}\label{distance1}\normalfont{(\emph{Vertical distance between a fuzzy plane and an $S$-type fuzzy point}).
Let $ \widetilde{\varPi} $ be a fuzzy plane. Consider $xy$-plane to be a core plane of $\widetilde{\varPi}$ and the $z$-axis to be perpendicular to the plane. Let $\widetilde{P}$ be an $S$-type fuzzy point whose core lies on $z$-axis. If we call the distance function $d((x_1, y_1, z_1), (x_2, y_2, z_2))=\lvert z_1-z_2\lvert$, then the vertical distance between $\widetilde{P}$ and $\widetilde{\varPi}$, denoted as $\widetilde{D}_v$, can be established by
\begin{equation}\label{Pdistance1}
    \widetilde{D}_v=\underset{\alpha\in[0,1]}{\bigvee}\left[d_{l}^\alpha, d_{u}^\alpha\right],
    \end{equation}
where
        \[d_{l}^\alpha=\inf\left\{d(p,q): ~p\in\widetilde{P}(\alpha) ~\text{and}~ q\in\widetilde{\varPi}(\alpha)\right\}\] and \[d_{u}^\alpha=\sup\left\{d(p,q): ~p\in\widetilde{P}(\alpha) ~\text{and}~ q\in\widetilde{\varPi}(\alpha)\right\}.\]
The notion $d_{l}^\alpha$ and $d_{u}^\alpha$ are associated with the membership number $\alpha$.

}
\end{Definition}

\begin{Definition}\label{distance2}\normalfont{(\emph{Perpendicular distance between a fuzzy plane and an $S$-type fuzzy point}).
Let $\widetilde{P}$ be an $S$-type fuzzy point and let $ \widetilde{\varPi} $ be a fuzzy plane. The perpendicular distance between $\widetilde{P}$ and $\widetilde{\varPi}$, say $\widetilde{D}_p$, can be established by
\begin{equation}\label{Pdistance2}
    \widetilde{D}_p=\underset{\alpha\in[0,1]}{\bigvee}\left[d_{l}^\alpha, d_{u}^\alpha\right],
    \end{equation}
where
        \[d_{l}^\alpha=\inf\left\{d(p,\varPi): ~p\in\widetilde{P}(\alpha) ~\text{and}~ \varPi\in\widetilde{\varPi}(\alpha)\right\}\] and \[d_{u}^\alpha=\sup\left\{d(p,\varPi): ~p\in\widetilde{P}(\alpha) ~\text{and}~ \varPi\in\widetilde{\varPi}(\alpha)\right\},\] where $d(p,\varPi)$ is the perpendicular distance between the point $p$ and the plane $\varPi$.
The notion $d_{l}^\alpha$ and $d_{u}^\alpha$ are associated with the membership number $\alpha$.

}
\end{Definition}

\begin{Theorem}\label{dis}
The distance $\widetilde{D}$ between a fuzzy plane and an $S$-type fuzzy point is a fuzzy number.
\end{Theorem}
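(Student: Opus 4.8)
The plan is to realise $\widetilde{D}$ — whether it is the vertical distance $\widetilde{D}_v$ of Definition \ref{distance1} or the perpendicular distance $\widetilde{D}_p$ of Definition \ref{distance2} — from its putative family of $\alpha$-cuts $D^\alpha := [d_l^\alpha, d_u^\alpha]$ supplied by (\ref{Pdistance1})–(\ref{Pdistance2}); this family is exactly what the symbol $\widetilde{D}=\bigvee_{\alpha\in[0,1]}[d_l^\alpha,d_u^\alpha]$ abbreviates. Then I would apply the standard representation (stacking) theorem for fuzzy numbers: a family $\{D^\alpha\}_{\alpha\in[0,1]}$ of subsets of $\mathbb{R}$ is the $\alpha$-cut family of a fuzzy number as soon as (i) each $D^\alpha$ with $\alpha>0$ is a non-empty closed bounded interval, (ii) $\alpha_1\le\alpha_2$ implies $D^{\alpha_2}\subseteq D^{\alpha_1}$, and (iii) $D^\alpha=\bigcap_{\beta<\alpha}D^\beta$ for every $\alpha\in(0,1]$. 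The whole argument thus reduces to verifying (i)–(iii), and everything comes down to understanding the two functions $\alpha\mapsto d_l^\alpha$ and $\alpha\mapsto d_u^\alpha$.

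For (i): the $\alpha$-cut $\widetilde{P}(\alpha)$ of an $S$-type fuzzy point is a compact cuboidal region of $\mathbb{R}^3$, and by (\ref{eq5}) the $\alpha$-cut $\widetilde{\varPi}(\alpha)$ is a family of crisp planes whose coefficient quadruples form a compact subset of $\mathbb{R}^4$ with nowhere-vanishing normal (a non-degeneracy condition implicit in the very notion of a fuzzy plane). Since $(p,q)\mapsto d(p,q)$ and $(p,\varPi)\mapsto d(p,\varPi)$ are continuous on those sets, the extremal values $d_l^\alpha$ (an infimum) and $d_u^\alpha$ (a supremum) are attained, so $D^\alpha$ is a closed bounded interval; it degenerates to a single point at $\alpha=1$, where the cores of $\widetilde{P}$ and $\widetilde{\varPi}$ are one point and one plane respectively. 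In particular $D^1\neq\emptyset$, which is normality, and $D^0$ bounds the support of $\widetilde{D}$.

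For (ii): if $\alpha_1\le\alpha_2$ then $\widetilde{P}(\alpha_2)\subseteq\widetilde{P}(\alpha_1)$ and $\widetilde{\varPi}(\alpha_2)\subseteq\widetilde{\varPi}(\alpha_1)$, a general feature of $\alpha$-cuts, so passing to the smaller feasible sets can only raise the infimum and lower the supremum: $d_l^{\alpha_1}\le d_l^{\alpha_2}\le d_u^{\alpha_2}\le d_u^{\alpha_1}$, i.e. $D^{\alpha_2}\subseteq D^{\alpha_1}$; equivalently $\alpha\mapsto d_l^\alpha$ is non-decreasing and $\alpha\mapsto d_u^\alpha$ is non-increasing. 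Combined with (i), this already makes every superlevel set of $\mu(\,\cdot\mid\widetilde{D})$ an interval, i.e. $\widetilde{D}$ is a convex fuzzy set.

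The delicate point — and the one I expect to be the main obstacle — is condition (iii), $D^\alpha=\bigcap_{\beta<\alpha}D^\beta$, which is exactly left-continuity of $\alpha\mapsto d_l^\alpha$ and $\alpha\mapsto d_u^\alpha$ (hence upper semicontinuity of the membership function of $\widetilde{D}$), and monotonicity alone does not give this. I would obtain it through a maximum-theorem argument: the $\alpha$-cut endpoints of the $LR$-numbers $\widetilde{a},\widetilde{b},\widetilde{c},\widetilde{d}$ (all with the fixed shape $\max\{0,1-|x|\}$ of the preliminaries) are continuous in $\alpha$, so the correspondences $\alpha\mapsto\widetilde{P}(\alpha)$ and $\alpha\mapsto\widetilde{\varPi}(\alpha)$ are continuous with compact values in the Hausdorff metric, and the minimum and the maximum of a jointly continuous function over a compact set that moves continuously in this sense are continuous functions of the parameter. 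Once (i)–(iii) are in hand, the representation theorem shows that $\widetilde{D}$ is a fuzzy number. The case of $\widetilde{D}_v$ is the same argument, with all the compactness and continuity carried out in the single coordinate $z$ on which that distance function depends.
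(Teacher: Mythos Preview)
Your proposal is correct and is, in fact, far more than the paper itself supplies: the paper's entire proof is the single line ``Similar to the proof of Theorem~4.1 in \cite{ghosh2021analytical}'', with no argument given here. The route you sketch --- verify that the family $\{[d_l^\alpha,d_u^\alpha]\}_{\alpha\in[0,1]}$ satisfies the nesting and left-continuity hypotheses of the representation (stacking) theorem, obtaining (i) from compactness of the $\alpha$-cuts and continuity of the distance, (ii) from monotonicity of $\inf/\sup$ over nested feasible sets, and (iii) from a Berge maximum-theorem argument using continuity in $\alpha$ of the $LR$-endpoints --- is precisely the standard method and almost certainly what the cited result in \cite{ghosh2021analytical} does for the point-to-point fuzzy distance. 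So your approach and the paper's (deferred) approach coincide.

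One small caution worth recording: your compactness argument for $\widetilde{D}_p$ is clean because you parametrise $\widetilde{\varPi}(\alpha)$ by the coefficient quadruple $(a^\alpha,b^\alpha,c^\alpha,d^\alpha)$, which lives in a compact box, and $d(p,\varPi)$ is jointly continuous there under the non-degeneracy $(a,b,c)\neq 0$. For $\widetilde{D}_v$, however, Definition~\ref{distance1} literally takes $q$ to range over all \emph{points} of $\widetilde{\varPi}(\alpha)$, an unbounded set, so $d_u^\alpha$ as written need not be finite unless one reads the definition (as the paper tacitly intends, given the core is the $xy$-plane and $\widetilde{P}(1)$ sits on the $z$-axis) as restricting $q$ to the intersection of $\widetilde{\varPi}(\alpha)$ with the vertical line through the core of $\widetilde{P}$. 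Under that reading your one-variable reduction goes through unchanged; just make the reading explicit.
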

\begin{proof}
    Similar to the proof of Theorem $4.1$ in \cite{ghosh2021analytical}.
\end{proof}

\begin{Note}
\normalfont{A numerical example that illustrates the characteristic of the distance from an $S$-type fuzzy point to a fuzzy plane can be seen in the same scenario as explained in Example $5.2$ in \cite{ghosh2023analytical}.

}
\end{Note}

In the next section, we obtain an orientation from a set of fuzzy points consisting of finding a fitted fuzzy plane through the proposed analysis of the fuzzy plane.

\section{Fuzzy plane fitting}\label{sec4}

In the following subsection, we fit a fuzzy plane to the available data of imprecise locations in $\mathbb{R}^3$. It can be seen as an application of the fuzzy plane interpreted by $\widetilde{a}x+\widetilde{b}y+\widetilde{c}z+\widetilde{d}=0$.

\subsection{Formulation of a fuzzy plane fitted to the given \texorpdfstring{$S$}{Lg}-type fuzzy points}
Let $\widetilde{P}_1$, $\widetilde{P}_2$, $\cdots$, $\widetilde{P}_n$ be the given data to whom we want to fit a fuzzy plane such that the perpendicular distances between available space fuzzy points and the fitted fuzzy plane is minimum.

The scenario to fit a fuzzy plane is to first find the plane $ax+by+cz+d=0$ that fits the core of the fuzzy points $\widetilde{P}_1$, $\widetilde{P}_2$, $\cdots$, $\widetilde{P}_n$. Then, perpendicularly translate the fuzzy points to $ax+by+cz+d=0$. We try to find the best-fitted fuzzy plane, which has the minimum possible value for the sum of the square of the perpendicular distances of the fuzzy points. First, to understand what is meant by a perpendicular shifting of a fuzzy point, let us define it.

\begin{Definition}\normalfont{(\emph{Perpendicular shifting of an $S$-type fuzzy point}).\\
Let $\widetilde{P}(a, b, c)$ be an $S$-type fuzzy point with the membership function \[\mu \left((x, y, z)\middle\lvert\widetilde P\right) = f(x-a, y - b, z - c). \]
Let \[\tfrac{x-a}{\ell}=\tfrac{y-b}{m}=\tfrac{z-c}{n}=\lambda\] be a line passing through $(a, b, c)$ perpendicularly to the plane $ax+by+cz+d=0$.
Then, the perpendicularly shifted fuzzy point, say $\widetilde {P'}(a+\lambda \ell, b+\lambda m, c+\lambda n)$, for some $\lambda$, along a direction $(\ell, m, n)$ is established by the membership function as
	\[\mu \left((x, y, z)\middle\lvert\widetilde {P}(a+\lambda \ell, b+\lambda m, c+\lambda n)\right) = f(x-(a+\lambda \ell), y - (b+\lambda m), z - (c+\lambda n)). \]
    }
\end{Definition}

Let us now find a fuzzy plane $\widetilde{\varPi}_F: \widetilde{a}x+\widetilde{b}y+\widetilde{c}z+\widetilde{d}=0$, that fits to the given fuzzy points $\widetilde{P}_1 (a_1, b_1, c_1), \widetilde{P}_2 (a_2, b_2, c_2), \cdots, \widetilde{P}_n (a_n, b_n, c_n)$. Let $\widetilde{P'}_1 (a_1+\lambda \ell, b_1+\lambda m, c_1+\lambda n), \widetilde{P'}_2 (a_2+\lambda \ell, b_2+\lambda m, c_2+\lambda n), \cdots, \widetilde{P'}_n (a_n+\lambda \ell, b_n+\lambda m, c_n+\lambda n)$, for some $\lambda\in \mathbb{R}$, be perpendicularly shifted fuzzy points to  $\widetilde{\varPi}_F(1)$.
Consider the fuzzy numbers $\widetilde{N}_1, \widetilde{N}_2, \cdots, \widetilde{N}_n$ along the  lines perpendicular to $ax+by+cz+d=0$ from the $(a_1, b_1, c_1), (a_2, b_2, c_2), \cdots, (a_n, b_n, c_n)$ on $\widetilde{P'}_1(0), \widetilde{P'}_2(0), \cdots, \widetilde{P'}_n(0)$, respectively.
The following Algorithm \ref{algo1} illustrates the formulation of $\widetilde{\varPi}_F$.

\begin{algorithm}[H]
	\begin{algorithmic}

		\INPUT{Given $n$ continuous fuzzy numbers $\widetilde{N}_1, \widetilde{N}_2, \cdots, \widetilde{N}_n$.}
		\For{each $\alpha$ from $0$ to $1$}
  
  \State {Compute the same points $u^{\alpha}_1\in \widetilde{N}_1(0), u^{\alpha}_2\in\widetilde{N}_2(0), \cdots, u^{\alpha}_n\in\widetilde{N}_n(0) $ and $v^{\alpha}_1\in \widetilde{N}_1(0), v^{\alpha}_2\in\widetilde{N}_2(0), \cdots, v^{\alpha}_n\in\widetilde{N}_n(0) $. }
		\State Fit the planes $\varPi_1: a_{1}^\alpha x+b_{1}^\alpha y+c_{1}^\alpha z+d_{1}^\alpha=0$ and $\varPi_2: a_{2}^\alpha x+b_{2}^\alpha y+c_{2}^\alpha z+d_{2}^\alpha=0$ that approximate the points $u^{\alpha}_1, u^{\alpha}_2, \cdots, u^{\alpha}_n$ and $v^{\alpha}_1, v^{\alpha}_2, \cdots, v^{\alpha}_n$, respectively, by the method of the total least squares (see \cite{petravs2010total}).
  \State Returns $\widetilde{a}=[a_{1}^\alpha, a_{2}^\alpha]$, $\widetilde{b}=[b_{1}^\alpha, b_{2}^\alpha]$, $\widetilde{c}=[c_{1}^\alpha, c_{2}^\alpha]$ and $\widetilde{d}=[d_{1}^\alpha, d_{2}^\alpha]$
		\OUTPUT{$\widetilde{\varPi}_F=\underset{\alpha\in [0, 1]}{\bigvee} \widetilde{\varPi}_F(\alpha)$.}
		
  \EndFor
	\end{algorithmic}
	\caption{To evaluate $\widetilde{\varPi}_F$}
	\label{algo1}
\end{algorithm}

\begin{Theorem}\label{thm4.1}
    Let $\widetilde{P}_1, \widetilde{P}_2, \cdots, \widetilde{P}_n$ be the given imprecise locations in $\mathbb{R}^3$. The fuzzy plane $\widetilde{\varPi}_F$ obtained from the Algorithm \ref{algo1} is always unique.
\end{Theorem}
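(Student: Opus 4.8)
The plan is to establish uniqueness levelwise, using that Algorithm \ref{algo1} returns $\widetilde{\varPi}_F$ as the sup-union $\bigvee_{\alpha\in[0,1]}\widetilde{\varPi}_F(\alpha)$ of its $\alpha$-cuts. Since this union is a deterministic operation once every cut is fixed, it suffices to show that for each $\alpha\in[0,1]$ the data $\widetilde a=[a_1^\alpha,a_2^\alpha]$, $\widetilde b=[b_1^\alpha,b_2^\alpha]$, $\widetilde c=[c_1^\alpha,c_2^\alpha]$, $\widetilde d=[d_1^\alpha,d_2^\alpha]$ produced by the loop are uniquely determined by the given fuzzy points $\widetilde P_1,\dots,\widetilde P_n$.

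First I would check that the input fed to the $\alpha$-th iteration is itself unambiguous. The core plane $ax+by+cz+d=0$ is the total least squares fit to the cores $(a_i,b_i,c_i)$, and its unit normal $(\ell,m,n)$ and the shift parameter $\lambda$ are then fixed; hence the perpendicularly shifted fuzzy points $\widetilde P_i'$ and the fuzzy numbers $\widetilde N_i$ placed along the normal lines are pinned down. Because each $\widetilde N_i$ is a continuous fuzzy number, the $\alpha$-cut $\widetilde N_i(\alpha)$ is a compact interval with two well-defined endpoints, so the same points $u_1^\alpha,\dots,u_n^\alpha$ and $v_1^\alpha,\dots,v_n^\alpha$ of the loop (the left and right endpoints of the cuts) are uniquely computed.

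The decisive step is uniqueness of the total least squares plane fitted to a prescribed finite point set. Here I would recall the closed form of the orthogonal-regression plane (see \cite{petravs2010total,fernandez2005obtaining}): it passes through the centroid $\bar u^\alpha=\tfrac1n\sum_i u_i^\alpha$, and its unit normal is an eigenvector for the smallest eigenvalue of the symmetric positive semidefinite scatter matrix $M^\alpha=\sum_{i=1}^n(u_i^\alpha-\bar u^\alpha)(u_i^\alpha-\bar u^\alpha)^{\top}$. Thus $\varPi_1$, and likewise $\varPi_2$, is the unique minimiser of the sum of squared perpendicular distances provided the least eigenvalue of $M^\alpha$ is simple; for imprecise locations in general position (points not all collinear, and not arranged so that $M^\alpha$ has a repeated least eigenvalue) this holds, and so $a_j^\alpha,b_j^\alpha,c_j^\alpha,d_j^\alpha$ for $j=1,2$ are determined without ambiguity. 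I expect this eigenvalue-simplicity condition to be the main obstacle: in full generality the TLS normal is unique only up to the least eigenspace, so the blanket claim ``always unique'' relies on the standing non-degeneracy of the input, which I would either state as an explicit hypothesis or justify from the nature of the fuzzy points being fitted.

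Assembling the pieces, for every $\alpha\in[0,1]$ the loop returns a uniquely determined $\widetilde{\varPi}_F(\alpha)$, and since $\widetilde{\varPi}_F=\bigvee_{\alpha\in[0,1]}\widetilde{\varPi}_F(\alpha)$ depends only on this assignment, the fitted fuzzy plane is unique. A final remark I would add is that $\{\widetilde{\varPi}_F(\alpha)\}_\alpha$ is nested and varies continuously with $\alpha$ — inherited from the continuity of $\alpha\mapsto u_i^\alpha,v_i^\alpha$ and the continuous dependence of the TLS solution on its data — so that the union is a genuine fuzzy plane in the sense of Section \ref{sec3}.
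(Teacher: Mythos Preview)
Your argument is correct and considerably more careful than the paper's own proof. The paper dispatches Theorem \ref{thm4.1} in one sentence: it simply asserts that because classical regression (here, total least squares) yields a unique fitted curve to a set of crisp data points, the output of Algorithm \ref{algo1} is trivially unique. No levelwise decomposition, no discussion of the scatter matrix or its eigenstructure, and no mention of any non-degeneracy hypothesis appears.

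What you do differently is unpack this appeal. You make explicit that uniqueness of $\widetilde{\varPi}_F$ reduces to uniqueness of each $\widetilde{\varPi}_F(\alpha)$, trace the deterministic construction of the same points $u_i^\alpha,v_i^\alpha$ from the continuous fuzzy numbers $\widetilde N_i$, and then invoke the actual characterisation of the TLS plane via the centroid and the least eigenvector of the scatter matrix. In doing so you surface the eigenvalue-simplicity condition that the paper's proof silently assumes: without it the TLS normal is only determined up to the least eigenspace, so the word ``always'' in the statement is strictly speaking too strong. Your suggestion to record this as an explicit general-position hypothesis is the honest fix, and it is something the paper does not address. Your closing remark on nestedness and continuous dependence is also a genuine addition, since it confirms that the output of the algorithm is a fuzzy plane in the sense of Section \ref{sec3} rather than merely a family of crisp planes indexed by $\alpha$.
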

\begin{proof}
    Since, in the classical regression analysis, a unique curve is fitted to the data points by the curve fitting method, hence, by the steps given in Algorithm \ref{algo1}, it is trivial that the fuzzy plane $\widetilde{\varPi}_F$ fits the given imprecise data uniquely.
\end{proof}

\begin{Theorem}\label{thm4.2}
  Let $\widetilde{P}_1, \widetilde{P}_2, \cdots, \widetilde{P}_n$ be the given imprecise locations in $\mathbb{R}^3$. A fuzzy plane is being fitted to them by the Algorithm \ref{algo1}. The sum $\widetilde{D}_{p_1}^2+\widetilde{D}_{p_2}^2+\cdots+\widetilde{D}_{p_n}^2$ will be minimum amongst all possible fuzzy plane fits passing through $\widetilde{P'}_1, \widetilde{P'}_2, \cdots, \widetilde{P'}_n$. Here, $\widetilde{D}_{p_i}$ is the perpendicular distance between $\widetilde{P}_i$ and $\widetilde{\varPi}_F$, for $i=1, 2, \cdots, n$.
\end{Theorem}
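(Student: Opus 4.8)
The plan is to reduce the statement to the classical optimality of total least squares, executed level by level. First I would unpack Definition \ref{distance2}: by Theorem \ref{dis}, for each $i$ the perpendicular fuzzy distance $\widetilde{D}_{p_i}$ is a fuzzy number whose $\alpha$-cut is the interval $[d^\alpha_{l,i}, d^\alpha_{u,i}]$, with $d^\alpha_{l,i}$ and $d^\alpha_{u,i}$ the infimum and supremum of $d(p,\varPi)$ over $p\in\widetilde{P}_i(\alpha)$ and $\varPi\in\widetilde{\varPi}_F(\alpha)$. Since $t\mapsto t^2$ is continuous and increasing on $[0,\infty)$, the $\alpha$-cut of $\widetilde{D}_{p_i}^2$ is $\big[(d^\alpha_{l,i})^2,(d^\alpha_{u,i})^2\big]$, and because fuzzy addition acts level-wise (Zadeh's extension principle), the $\alpha$-cut of $\widetilde{S}:=\widetilde{D}_{p_1}^2+\cdots+\widetilde{D}_{p_n}^2$ is $\big[\sum_{i=1}^n (d^\alpha_{l,i})^2,\ \sum_{i=1}^n (d^\alpha_{u,i})^2\big]$. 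Hence $\widetilde{S}$ is minimal (in the sense that its $\alpha$-cuts are contained in those of any competitor, i.e.\ both endpoint functions are pointwise least) precisely when the two real functions $\alpha\mapsto\sum_i (d^\alpha_{l,i})^2$ and $\alpha\mapsto\sum_i (d^\alpha_{u,i})^2$ are simultaneously minimised over all fuzzy plane fits through $\widetilde{P'}_1,\ldots,\widetilde{P'}_n$.

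Next I would identify these two endpoint sums with quantities appearing inside Algorithm \ref{algo1}. Because each $\widetilde{P'}_i$ has been perpendicularly shifted onto $\widetilde{\varPi}_F(1)$ and the fuzziness of $\widetilde{P}_i$ transverse to $ax+by+cz+d=0$ is carried precisely by the one-dimensional fuzzy number $\widetilde{N}_i$, the extremal point–plane pairs realising the infimum and supremum in Definition \ref{distance2} are attained at the same points $u^\alpha_i\in\widetilde{N}_i(0)$ and $v^\alpha_i\in\widetilde{N}_i(0)$ measured against the two boundary planes $\varPi_1: a^\alpha_1 x+b^\alpha_1 y+c^\alpha_1 z+d^\alpha_1=0$ and $\varPi_2: a^\alpha_2 x+b^\alpha_2 y+c^\alpha_2 z+d^\alpha_2=0$ of $\widetilde{\varPi}_F(\alpha)$. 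This step uses that the perpendicular distance to a plane is monotone along the perpendicular line, together with the same/inverse-point description of the $\alpha$-cut of an $S$-type fuzzy point and its behaviour under perpendicular translation. It yields $\sum_i (d^\alpha_{l,i})^2 = \sum_i d(u^\alpha_i,\varPi_1)^2$ and $\sum_i (d^\alpha_{u,i})^2 = \sum_i d(v^\alpha_i,\varPi_2)^2$ (up to the harmless relabelling $l\leftrightarrow u$ according to which spread is larger).

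Finally I would invoke the defining property of the orthogonal (total least squares) regression used in Algorithm \ref{algo1}: among all planes in $\mathbb{R}^3$, the total-least-squares plane fitted to a finite point set is the one minimising the sum of squared perpendicular distances to those points (see \cite{petravs2010total}). Applied to the point sets $\{u^\alpha_1,\dots,u^\alpha_n\}$ and $\{v^\alpha_1,\dots,v^\alpha_n\}$, this gives exactly that $\varPi_1$ minimises $\sum_i d(u^\alpha_i,\varPi)^2$ and $\varPi_2$ minimises $\sum_i d(v^\alpha_i,\varPi)^2$ over all planes $\varPi$. Therefore, for every $\alpha\in[0,1]$, both endpoint functions of the $\alpha$-cut of $\widetilde{S}$ attain their minimum at the output of Algorithm \ref{algo1}; since a fuzzy number is recovered from its $\alpha$-cuts, $\widetilde{D}_{p_1}^2+\cdots+\widetilde{D}_{p_n}^2$ is minimal among all fuzzy plane fits passing through $\widetilde{P'}_1,\dots,\widetilde{P'}_n$, as claimed.

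The step I expect to be the main obstacle is the second one: proving that the infimum and supremum in Definition \ref{distance2}, taken over the whole $\alpha$-cut of the cuboid-shaped fuzzy point and over the slab of planes constituting $\widetilde{\varPi}_F(\alpha)$, collapse exactly to the perpendicular distances of the same points $u^\alpha_i, v^\alpha_i$ against the two boundary planes $\varPi_1,\varPi_2$. This requires careful use of the geometry of $S$-type fuzzy points and of same and inverse points under perpendicular translation, combined with monotonicity of the point-to-plane distance; by contrast, the level-wise fuzzy arithmetic, the monotonicity of squaring, and the total-least-squares optimality are routine or directly quotable.
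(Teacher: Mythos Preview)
The paper's own proof is a single sentence: ``The proof is trivial from the Theorem \ref{thm4.1},'' and Theorem \ref{thm4.1} in turn is dispatched with a one-line appeal to the uniqueness of the classical total-least-squares fit. In other words, the paper supplies essentially no argument beyond asserting that the classical optimality transfers automatically to the fuzzy setting.

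Your proposal is therefore a genuinely different---and far more substantive---route. You make explicit what the paper leaves implicit: the level-wise decomposition of $\widetilde{D}_{p_i}$ via Definition \ref{distance2}, the behaviour of squaring and fuzzy addition on $\alpha$-cuts, and the reduction of each endpoint function to a classical TLS problem on the same points $u^\alpha_i$ and $v^\alpha_i$. What this buys you is an actual mechanism for why the Algorithm \ref{algo1} output should be optimal, rather than a bare appeal to uniqueness; what the paper's approach buys is brevity at the cost of rigor.

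That said, your self-identified obstacle in step two is real and is not addressed by anything in the paper either. Two points deserve care. First, the paper never defines what ``minimum'' means for a fuzzy quantity like $\widetilde{D}_{p_1}^2+\cdots+\widetilde{D}_{p_n}^2$, so your choice (simultaneous pointwise minimality of both endpoint functions) is an interpretation you are supplying, not something you can quote. Second, the identification $\sum_i (d^\alpha_{l,i})^2=\sum_i d(u^\alpha_i,\varPi_1)^2$ is delicate: the infimum in Definition \ref{distance2} ranges over \emph{all} planes in $\widetilde{\varPi}_F(\alpha)$, and for a competing fuzzy plane with larger spread this infimum can drop (even to zero), which would \emph{lower} the left endpoint rather than raise it. The constraint ``passing through $\widetilde{P'}_1,\dots,\widetilde{P'}_n$'' is what should prevent this, but the paper never says precisely what that phrase means for a fuzzy plane, so you will have to fix a definition and check that it pins down the $\alpha$-level boundary planes tightly enough for the TLS argument to bite.
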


\begin{proof}
    The proof is trivial from the Theorem \ref{thm4.1}.
\end{proof}

Let us say the surfaces $g_1(x, y, z)=0$ and $g_2(x, y, z)=0$ approximate the boundary points of the $\widetilde{N}_1(0)$, $\widetilde{N}_2(0)$, $\widetilde{N}_3(0), \cdots, \widetilde{N}_n(0)$. Thus, two boundary surfaces bound a fuzzy plane and there must be one crisp surface between those surfaces. Consequently, a fuzzy plane can be expressed by an equational form 
\begin{equation}\label{eqq}
 \left(g_1(x, y, z)/ax+by+cz+d/g_2(x, y, z) \right)_{LR}=0,
 \end{equation}
 where $L$ and $R$ be suitable reference function.
Here, the Equation (\ref{eqq}) means that the membership grade of the points on $\widetilde{\varPi}_{F}(0)$ gradually increases from 0 to 1 as we move from $g_1(x, y, z)=0$ to $ax+by+cz+d=0$ and decreases from 1 to 0 as we move from  $ax+by+cz+d=0$ to $g_2(x, y, z)=0$.

In the following, we give some examples that show the importance of the fuzzy plane fitted to the given imprecise data.

\subsection{An application of fitted fuzzy plane \texorpdfstring{$\widetilde{\varPi}_F$}{Lg}}
\begin{Example}\label{Ex1}\normalfont{(\emph{Fitted fuzzy plane $\widetilde{\varPi}_{F}$}).\\
    A company CEO wants to find a fuzzy function that estimates the goodness of revenue per year based on the number of products sold per year and the number of employees working that year.
 Let the goodness of revenue for the four years be represented by the following fuzzy numbers, say $\widetilde{N_1}$, $\widetilde{N_2}$, $\widetilde{N_3}$, $\widetilde{N_4}$, evaluated as
    
    \begin{equation}
    \mu \left(N \middle\lvert\widetilde{N_1}\right) =
    \begin{dcases*}
\max\lbrace 0, 1-\left(\tfrac{10-N}{10}\right) \rbrace & if $ 0\leq N\leq 10 $\\
 1 & if $N> 10$,\\
 \end{dcases*}
 \end{equation}

  {\footnotesize{\begin{equation}
    \mu \left(N \middle\lvert\widetilde{N_2}\right)=\mu \left(N \middle\lvert\widetilde{N_3}\right)=\mu \left(N \middle\lvert\widetilde{N_4}\right) =
    \begin{dcases*}
\max\lbrace 0, 1-\left(\tfrac{10-N}{10}\right)^2 \rbrace & if $ 0\leq N\leq 10 $\\
 1 & if $N> 10$.\\
 \end{dcases*}
 \end{equation}}}
  Let the following data be collected from the past
$(50, 20, \widetilde{N}_1)$, $(30, 5, \widetilde{N}_2)$, $(35, 20, \widetilde{N}_3)$, $(60, 25, \widetilde{N}_4)$. Let the same points of $\widetilde{N_1}$, $\widetilde{N_2}$, $\widetilde{N_3}$ and $\widetilde{N_4}$ be $10\alpha$, $10(1-\sqrt{1-\alpha})$, $10(1-\sqrt{1-\alpha})$, $10(1-\sqrt{1-\alpha})$.
By Algorithm \ref{algo1},

\begin{equation*}
\mu \left((x, y, z) \middle\lvert\widetilde{\varPi}_F\right)=
 \begin{dcases*}
 r & if   $ 0 \leq z \leq 10 $\\
 1 & if  $ z > 10 $,
 \end{dcases*}
 \end{equation*}
 where 
 \begin{align*}
 & r = \sup \{\alpha: (x, y, z) ~\text{lies on the planes fitted to}~ (50, 20, N^{\alpha}_1), (30, 5, N^{\alpha}_2),  \\
 ~&~~~~~(35, 20, N^{\alpha}_3),~ (60, 25, N^{\alpha}_4), ~\text{where}~ N^{\alpha}_1, N^{\alpha}_2, N^{\alpha}_3, N^{\alpha}_4 ~ \text{are same points}\\
 ~& ~~~~\text{of}~\widetilde{N}_1, \widetilde{N}_2, \widetilde{N}_3, \widetilde{N}_4, ~\text{for}~ \alpha\in[0, 1] \}.
 \end{align*}

The core plane $\widetilde{\varPi}_{F}(1): z=10$. For $\alpha=0.5$, $\widetilde{\varPi}_{F}(0.5): z=0.031 x-0.0178 y + 2.3694$. For $\alpha=0.8$, $\widetilde{\varPi}_{F}(0.8): z = 0.0375x - 0.0205y + 4.8631$. For $\alpha=0$, the boundary surface of the fuzzy plane is $z=0$. A geometrical representation of the fitted fuzzy plane for Example \ref{Ex1} is depicted in Figure \ref{fig2}.}
\end{Example}

\begin{figure}[ht]
	\begin{center}
		\includegraphics[scale=.4]{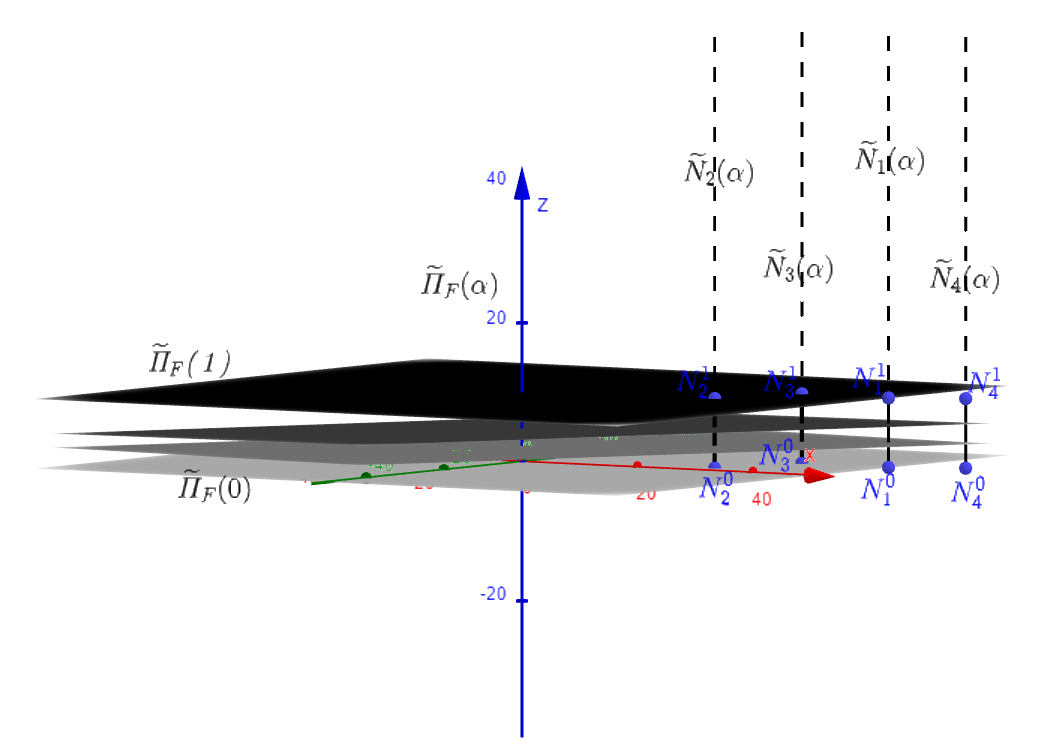}
		\caption{Fitted fuzzy plane for the Example \ref{Ex1}}
		\label{fig2}
	\end{center}
\end{figure}

The following example explains the construction of a fitted fuzzy plane when the data given in $x$, $y$, and $z$ directions are imprecise. 

\begin{Example}\label{Ex2}\normalfont{(\emph{Fitted fuzzy plane $\widetilde{\varPi}_{F}$}).\\
City planning officials want to obtain a fuzzy linear function that predicts the cost of the project based on the length of the construction project and the size of the construction firm (stories). Let the size, length, and cost be represented by fuzzy numbers as shown in $(a)$, $(b)$ and $(c)$ in Figure \ref{fig3}, respectively.

\begin{figure}[H]
 \begin{center}
     \begin{subfigure}{0.32\textwidth}
        \includegraphics[width=0.9\linewidth]{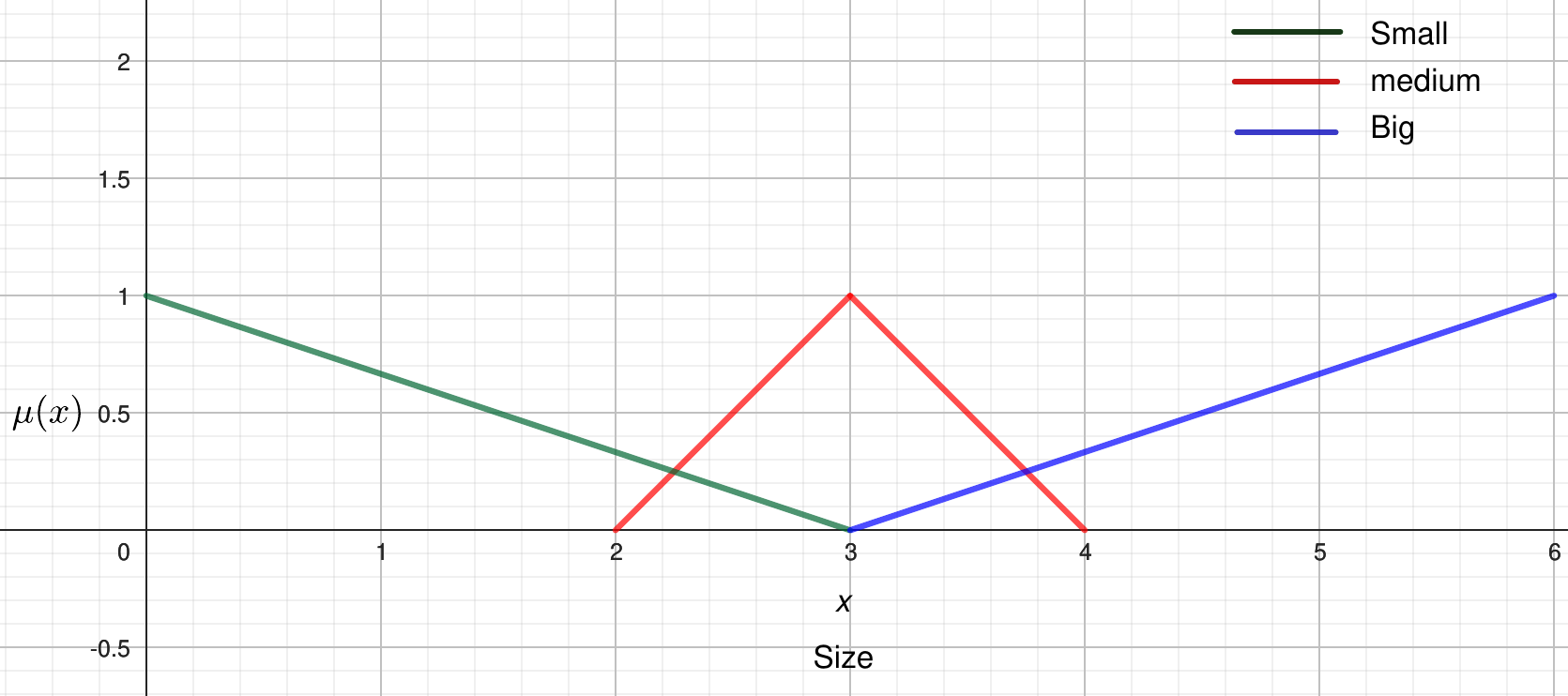}
        \caption{}
    \end{subfigure}
    \begin{subfigure}{0.32\textwidth}
        \includegraphics[width=0.9\linewidth]{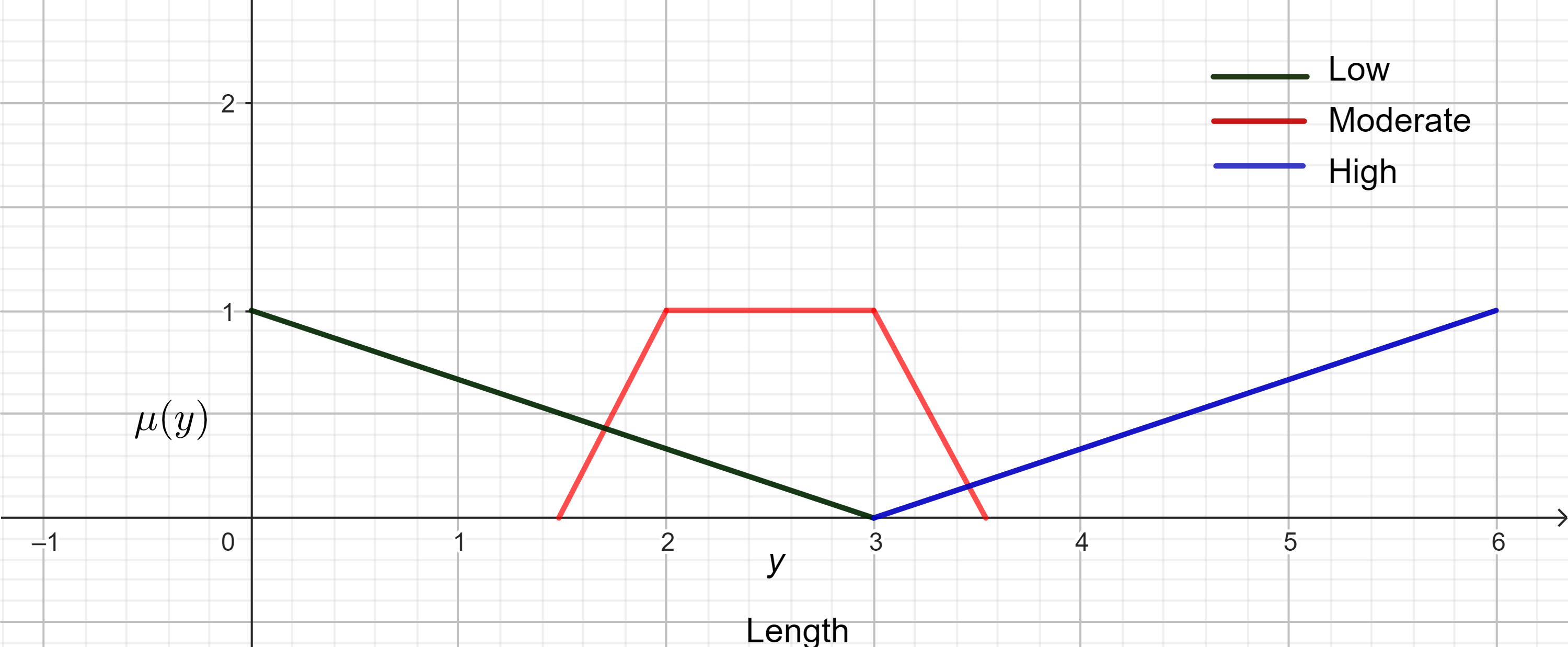}
        \caption{}
    \end{subfigure}
    \begin{subfigure}{0.32\textwidth}
        \includegraphics[width=0.9\linewidth]{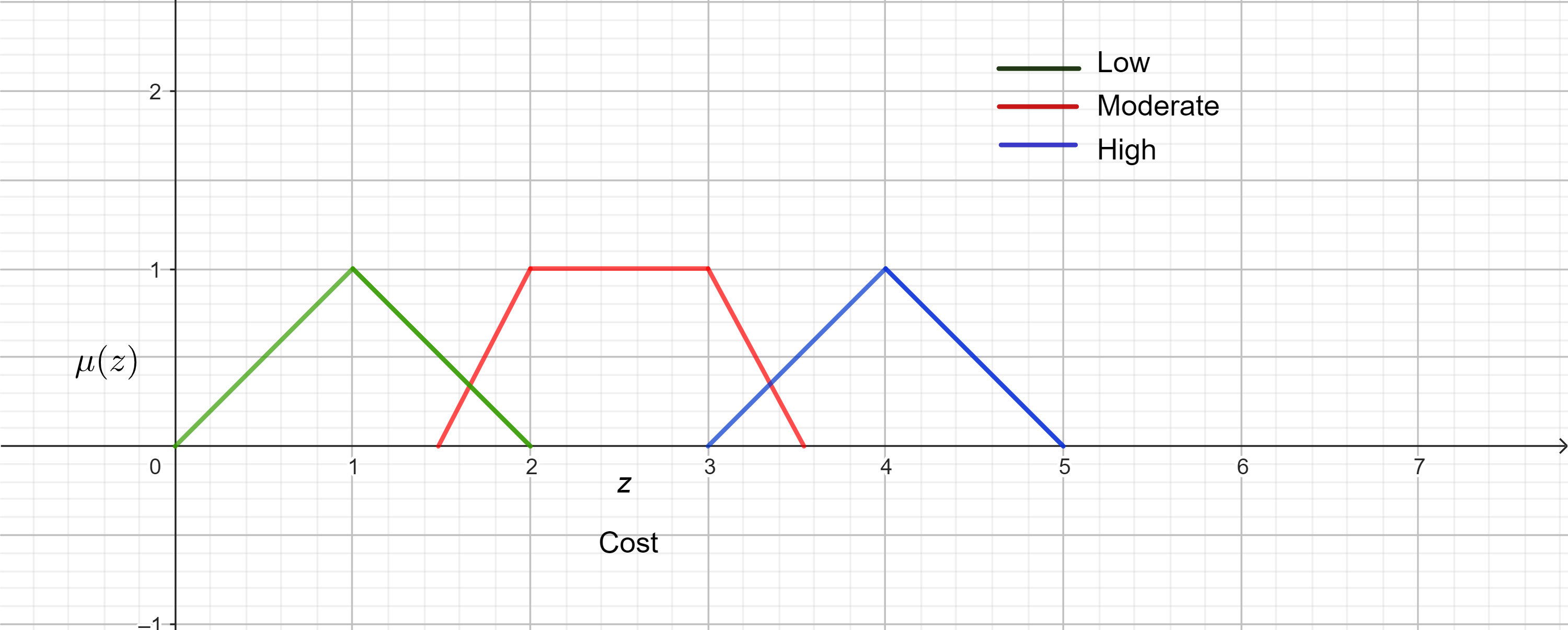}
        \caption{}
    \end{subfigure}
    \end{center}
 
    \caption{Representation of size, length, and cost as fuzzy numbers}
       \label{fig3}
\end{figure}  

Let the data collected from the past be $\left(\widetilde{x}_1, \widetilde{y}_1, \widetilde{z}_1\right)$, $\left(\widetilde{x}_2, \widetilde{y}_2, \widetilde{z}_2\right)$, $\left(\widetilde{x}_3, \widetilde{y}_3, \widetilde{z}_3\right)$, $\left(\widetilde{x}_4, \widetilde{y}_4, \widetilde{z}_4\right)$, say.

By Equation (\ref{eq1}), $\widetilde{P}_1=:\left(\widetilde{x}_1, \widetilde{y}_1, \widetilde{z}_1\right)$, $\widetilde{P}_2=:\left(\widetilde{x}_2, \widetilde{y}_2, \widetilde{z}_2\right)$, $\widetilde{P}_3=:\left(\widetilde{x}_3, \widetilde{y}_3, \widetilde{z}_3\right)$, and $\widetilde{P}_4=:\left(\widetilde{x}_4, \widetilde{y}_4, \widetilde{z}_4\right)$ are fuzzy points with core $(x_1, y_1, z_1)$, $(x_2, y_2, z_2)$, $(x_3, y_3, z_3)$, and $(x_4, y_4, z_4)$, respectively (see Figure \ref{fig4}). Let the membership functions of $\widetilde{P}_i$'s be 
\begin{equation}
    \mu \left((x, y, z)\middle\lvert\widetilde {P}_i\right) = 1-\max\left\lbrace \left\lvert\tfrac{x-x_i}{l_1}\right\lvert, \left\lvert\tfrac{y-y_i}{l_2}\right\lvert, \left\lvert\tfrac{z-z_i}{l_3}\right\lvert  \right\rbrace.
\end{equation}

Consider $ax+by+cz+d=0$ is a plane that fits the core points $(x_1, y_1, z_1)$, $(x_2, y_2, z_2)$, $(x_3, y_3, z_3)$, and $(x_4, y_4, z_4)$ by total least square method (see \cite{petravs2010total}).\\
Now, we perpendicularly shift the fuzzy points to the plane $ax+by+cz+d=0$. To do this, consider lines $L_i: \tfrac{x-x_i}{a}=\tfrac{x-y_i}{b}=\tfrac{z-z_i}{c}=\lambda$. The value of $\lambda$ for which the points $x'_i=x_i+\lambda a$, $y'_i=y_i+\lambda b$, $z'_i=z_i+\lambda c$ satisfy the plane $ax+by+cz+d=0$ gives the core points of shifted fuzzy points,  $\widetilde{P'}_i$ say. By the Definition \ref{translation}, the membership value of the fuzzy points $\widetilde{P'}_i$ is
\begin{equation}
    \mu \left((x, y, z)\middle\lvert\widetilde {P'}_i\right) = 1-\max\left\lbrace \left\lvert\tfrac{x-x'_i}{l_1}\right\lvert, \left\lvert\tfrac{y-y'_i}{l_2}\right\lvert, \left\lvert\tfrac{z-z'_i}{l_3}\right\lvert  \right\rbrace.
\end{equation}\\
Next, our attempt will be to find the fuzzy numbers on $\widetilde{P'}_1(0)$, $\widetilde{P'}_2(0)$, $\widetilde{P'}_3(0)$, $\widetilde{P'}_4(0)$ along the directions perpendicular to the $ax+by+cz+d=0$. Let $L_i:\tfrac{x-x_i}{a}=\tfrac{x-y_i}{b}=\tfrac{z-z_i}{c}$ be lines perpendicular to the $ax+by+cz+d=0$, for $i=1,2,3,4$. Consider a reference point $A$ on $\widetilde{P'}_i(0)$, for some $i=1, 2, 3, 4$. Let $\hat{u}$, $\hat{v}$, and $\hat{w}=\hat{u}\times\hat{v}$ be unit vectors along the edges from $A$. For a point $P=:(x, y, z)$ along the line $L_i$ to be inside the support of $\widetilde{P'}_i$, the vector $\Vec{AP}$ must have its coordinates in the reference $(\hat{u}$, $\hat{v}, \hat{w})$ contemporaneously on the limit of the range $[- 2 l_1, 2 l_2]$. That is, the equation  
\begin{equation}\label{seq}
\begin{dcases*}
 - 2 l_1 \leq \Vec{AP} \cdot \hat{u} \leq 2 l_1\\
 - 2 l_1 \leq \Vec{AP} \cdot \hat{v} \leq 2 l_1 \\
  - 2 l_1 \leq \Vec{AP} \cdot \hat{w} \leq 2 l_1, \\
 \end{dcases*}
\end{equation}
must satisfy contemporaneously.
The point $P$ will not lie inside $\widetilde{P'}_i(0)$ if the Equation (\ref{seq}) is not satisfied contemporaneously. Let $P_1$, $P_2$, $Q_1$, $Q_2$, $R_1$, $R_2$, and $S_1$, $S_2$ be the points that satisfy the Equation (\ref{seq}) are the intersecting points of the line $L_i$ and $\widetilde{P'}_i(0)$, respectively. These intersecting points give the boundary points of the supports of the fuzzy number $\widetilde{N}_i$ along $L_i$, respectively. Let us denote $N_i=(x'_i, y'_i, z'_i)$, then the fuzzy numbers $\widetilde{N}_i$ can be represented as $\widetilde{N}_1=\left(P_1/N_1/P_2\right)$, $\widetilde{N}_2=\left(Q_1/N_2/Q_2\right)$, $\widetilde{N}_3=\left(R_1/N_3/R_2\right)$, $\widetilde{N}_4=\left(S_1/N_4/S_2\right)$ along $L_i$ on the supports of $\widetilde{P'}_i (0)$, for $i=1,2,3,4$ (see Figure \ref{fig4}). 
Now, by the Algorithm \ref{algo1}, I get $\widetilde{a}=[a_{1}^\alpha, a_{2}^\alpha]$, $\widetilde{b}=[b_{1}^\alpha, b_{2}^\alpha]$, $\widetilde{c}=[c_{1}^\alpha, c_{2}^\alpha]$ and $\widetilde{d}=[d_{1}^\alpha, d_{2}^\alpha]$. Hence, the boundary surfaces of the fitted fuzzy plane $\widetilde{\varPi}_{F}$ are $a_{1}^\alpha x+ b_{1}^\alpha y+c_{1}^\alpha z+d_{1}^\alpha=0$ and $a_{2}^\alpha x+ b_{2}^\alpha y+c_{2}^\alpha z+d_{2}^\alpha=0$. The fuzzy plane 
$\widetilde{\varPi}_{F}$ can be expressed by an equation form 
\begin{equation}\label{eqe}
 \left(a_{1}^\alpha x+ b_{1}^\alpha y+c_{1}^\alpha z+d_{1}^\alpha/ax+by+cz+d/a_{2}^\alpha x+ b_{2}^\alpha y+c_{2}^\alpha z+d_{2}^\alpha \right)_{LR}=0.
 \end{equation}
}
\end{Example}

\begin{figure}[ht]
	\begin{center}
		\includegraphics[scale=.8]{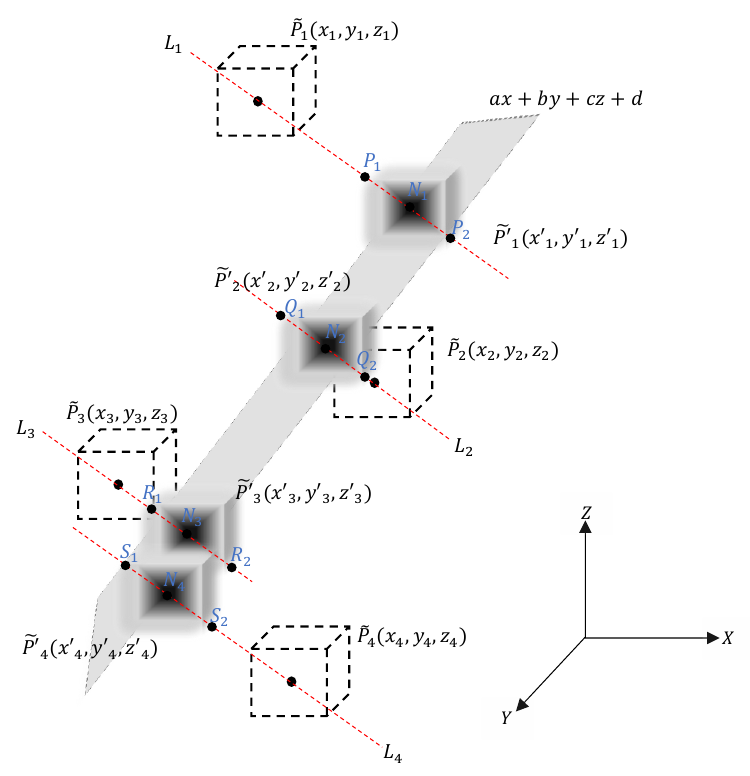}
		\caption{Fitted fuzzy plane for the Example \ref{Ex2}}
		\label{fig4}
	\end{center}
\end{figure}


\subsection{A degree of fuzzily fitted fuzzy plane to the given \texorpdfstring{$S$}{Lg}-type fuzzy points}

First, I will discuss the measurement of the belongingness of space fuzzy points to the fitted fuzzy plane. Then, with the help of that, I calculate the degree to which a fuzzy plane fits the given space fuzzy points. A high degree of fit is accompanied by good orientation quality. In the following, I define the containment of a space fuzzy point in a fuzzy plane and the degree of fit of the fuzzy plane.

\begin{Definition}\normalfont{(\emph{Containment of an $S$-type fuzzy point in a fuzzy plane}).\\
Let $\widetilde{\varPi}_{F}\equiv \left( g_1(x, y, z)/ax+by+cz+d/g_2(x, y,z)\right)_{LR}=0$ be a fuzzy plane fitted to space fuzzy points $\widetilde{P}_1$, $\widetilde{P}_2$, $\cdots$,  $\widetilde{P}_n$. The core points of $\widetilde{P}_1$, $\widetilde{P}_2$, $\cdots$, $\widetilde{P}_n$ lies on $\widetilde{\varPi}_{F}(1)$. The containment of $\widetilde{P}_i$ in the $\widetilde{\varPi}_{F}$ is obtained by a number, $\gamma$ say, where $\gamma$ is evaluated by 
\begin{equation}
\gamma
 = \begin{dcases*}
 1 & if $\widetilde{P}_{i}(0)\subset \widetilde{\varPi}_{F}(0)$,\\
 \gamma_1 & if $ \widetilde{P}_{i}(0)$ exceeds on $\widetilde{\varPi}_{F}(0)$ on the side of $g_1$,\\
  \gamma_2 & if $ \widetilde{P}_{i}(0)$ exceeds on $\widetilde{\varPi}_{F}(0)$ on the side of $g_2$,\\
 \min\{\gamma_1, \gamma_2\} & if $ \widetilde{P}_{i}(0)$ exceeds $\widetilde{\varPi}_{F}(0)$ on the either sides of $\widetilde{\varPi}_{F}(1)$,\\
 \end{dcases*}
\end{equation}
where $\gamma_1=\underset{(x,y,z): g_1(x, y, z)=0}{\sup} \mu\left((x, y, z)\middle\lvert\widetilde{P}_i\right)$ and $\gamma_2=\underset{(x,y,z): g_2(x, y, z)=0}{\sup} \mu\left((x, y, z)\middle\lvert\widetilde{P}_i\right)$}, for $i=1,2, \cdots, n$.
\end{Definition}

\begin{Note}
\normalfont{Note that for a containment of a space fuzzy point to the fitted fuzzy plane,  $\widetilde{P}_i(1)$ must belong to $ \widetilde{\varPi}_{F}(1)$. Otherwise, a space fuzzy point is not fuzzily contained in $\widetilde{\varPi}_{F}$, i.e., $\gamma=0$. }  
\end{Note}

In classical geometry, the points fitted by a plane may lie on the plane or not. In contrast, in the fuzzy environment, space fuzzy points may be fuzzily contained in the fitted fuzzy plane. In other words, there may be different cases such as $\widetilde{P}_{i}(0)\subset \widetilde{\varPi}_{F}(0)$, or  $ \widetilde{P}_{i}(0)$ exceeds on $\widetilde{\varPi}_{F}(0)$ on the side of $g_1$, or $ \widetilde{P}_{i}(0)$ exceeds on $\widetilde{\varPi}_{F}(0)$ on the side of $g_2$, or $ \widetilde{P}_{i}(0)$ exceeds $\widetilde{\varPi}_{F}(0)$ on the either sides of $\widetilde{\varPi}_{F}(1)$, or $\widetilde{P}_{i}(0)\bigcap \widetilde{\varPi}_{F}(0)$. Hence, I calculate the degree of a fuzzily fitted fuzzy plane to the given space fuzzy points.

\begin{Definition}\normalfont{(\emph{A degree of fuzzily fitted fuzzy plane to the given space fuzzy points}).\\
Let $\widetilde{\varPi}_{F}\equiv \left( g_1(x, y, z)/ax+by+cz+d/g_2(x, y,z)\right)_{LR}=0$ be a fuzzy plane fitted to space fuzzy points $\widetilde{P}_1$, $\widetilde{P}_2$, $\cdots$,  $\widetilde{P}_n$. The core points of $\widetilde{P}_1$, $\widetilde{P}_2$, $\cdots$, $\widetilde{P}_n$ lies on $\widetilde{\varPi}_{F}(1)$. A degree of fuzzily fitted fuzzy plane $\widetilde{\varPi}_{F}$ to the given space fuzzy points $\widetilde{P}_i$ is obtained by a number, $\delta$ say, where $\delta$ is evaluated by 
\begin{equation}\label{dff}
\delta=\min\{\gamma_1, \gamma_2, \cdots \gamma_n \}, 
\end{equation}
where $\gamma_i$ is the containment of $\widetilde{P}_i$ in the $\widetilde{\varPi}_{F}$, for $i=1,2, \cdots, n$.}
\end{Definition}

\begin{Note}
\normalfont{Note that if the core of space fuzzy points $\widetilde{P}_i$ does not belong to $ \widetilde{\varPi}_{F}(1)$, then the degree of a fuzzily fitted fuzzy plane to the given $\widetilde{P}_i$'s will always be zero, i.e., $\delta=0$ (obvious from Equation (\ref{dff})). }  
\end{Note}

\section{Conclusion}\label{sec5}
In this paper, firstly, I have made an explanation to perceive two fuzzy numbers for independent variables, and a corresponding fuzzy number for a dependent variable as an $S$-type fuzzy point. Next, my study has been based on a conceptualization of a fuzzy plane as a union of fuzzy numbers, or fuzzy points. A fuzzy plane which is visualized by a fuzzy linear equation has been approximated as a collection of fuzzy numbers, or fuzzy points. Fuzzy planes have also been proposed in equational form. A perpendicular fuzzy distance from an $S$-type fuzzy point to a fuzzy plane has also been revisited. I have given a different approach to fit a fuzzy plane to the available sets of imprecise locations in $\mathbb{R}^3$ using fuzzy geometry. The proposed fitted fuzzy plane has been deployed to find a linear pattern of given imprecise data. In the proposed study, I have given some Examples \ref{Ex1}, \ref{Ex2} that explain the significance of the proposed fuzzy plane. Moreover, a degree of fuzzily fitted fuzzy plane to the given data sets of space fuzzy points has been defined. 

As per the approach of \cite{das2022conceptualizing}, it can be noticed that the space fuzzy point obtained on the fuzzy plane may not fit within the fuzzy plane. A detailed study to obtain a geometrical transformation of a space fuzzy point to fit within the given fuzzy plane will be presented in the future. In fuzzy linear inequality, fuzzy regression, and trend analysis of fuzzy data, a fitted fuzzy plane to fuzzy data may have many potential applications. Further study can be focused on a formulation of other types of fuzzy shapes and their applications in fuzzy regression models to find the vague relationship between dependent and independent variables. I will concentrate my future efforts on the applications of the proposed study on creative design and computer vision applications.

\section*{Acknowledgments}
 The author acknowledges the support and resources provided by the School of Engineering at Ajeenkya DY Patil University, Pune, Maharashtra.

\section*{Compliance with Ethical Standards}
\begin{itemize}
    \item Conflict of interest: The authors declare no conflict of interest.
    \item Ethical approval: This article does not contain any studies with human participants or animals performed by any of the authors.
    \item Availability of data and materials: NA
    \item Funding statement: NA
\end{itemize}

\bibliography{sn-bibliography}


\end{document}